\newcommand{\bC}{{\mathbb C}}
\newcommand{\bP}{{\mathbb P}}
\newcommand{\bp}{{\mathbf p}}
\newcommand{\by}{{\mathbf y}}
 \newcommand{\cP}{{\mathcal P}}
\newcommand{\cC}{{\mathcal C}}
\newcommand{\Mbar}{\overline{\mathcal M}}
\newcommand{\pd}{\partial}
\DeclareMathOperator{\Aut}{Aut}
\newtheorem{theorem}{Theorem}[section]
\newtheorem{proposition}[theorem]{Proposition}
\theoremstyle{remark}
\theoremstyle{definition}
\newcommand{\bea}{\begin{eqnarray}}
\newcommand{\eea}{\end{eqnarray}}
\newcommand{\ben}{\begin{eqnarray*}}
\newcommand{\een}{\end{eqnarray*}}
\newcommand{\be}{\begin{equation}}
\newcommand{\ee}{\end{equation}}
\begin{document}

\title
{Quantum Mirror Curves for $\bC^3$ and the Resolved Confiold}

\author{Jian Zhou}
\address{Department of Mathematical Sciences\\Tsinghua University\\Beijing, 100084, China}
\email{jzhou@math.tsinghua.edu.cn}

\begin{abstract}
We establish a conjecture of Gukov and Su{\l}kowski
in the following three cases:
Lambert curve for Hurwitz numbers,
framed mirror curve of $\bC^3$,
and the framed mirror curve of the resolved conifold.
\end{abstract}

\maketitle

{\bf Key words.} Open string invariants,
Eynard-Orantin topological recursion,
quantum mirror curves.

 {\bf MSC 2000.} Primary 14N35.  Secondary 53D45.

\vspace{.2in}

In this   sequel to \cite{ZhoAiry}
we treat three more cases of a conjecture made by
Gukov and Su{\l}kowski \cite{Guk-Sul}.
We will study the following open string invariants:
Hurwitz numbers,
one-legged topological vertex,
and the resolved conifold with one framed outer D-brane.
By the BKMP remodeling conjecture \cite{Mar, BKMP, Bou-Mar},
in each of these cases,
the open string invariants can be encoded in some curve
on which one can define some differentials
by the Eynard-Orantin recursion \cite{Eyn-OraInv}.
For the proofs in these cases,
see \cite{BEMS, EMS, Che, ZhoMV, Eyn-Ora}.
In each case the mirror curve are given by an equation
$$A(u,v) = 0, \;\;\; u,v \in \bC $$
or
$$A(x,y) = 0, \;\;\; x, y \in \bC^*.$$
Gukov and Su{\l}kowski \cite{Guk-Sul} defined some partition function $Z$
and conjectured that there is a quantization
$\hat{A}(\hat{x},\hat{y})$ of $A(x,y)$ into differential operator
such that
\be
\hat{A}(\hat{x}, \hat{y}) Z = 0.
\ee
In \cite{ZhoAiry} we have established this in the case of the Airy curve.
In this paper we will deal with the three cases mentioned above.
We will use the known results of the corresponding A-model calculations
in each case,
and derivation of the quantum mirror curves
follows that of \cite[(6.7)-(6.12)]{Guk-Sul}.

\section{Hurwitz Numbers and Quantum Lambert Curve}

\label{sec:Lambert}

\subsection{Hurwitz numbers and Burnside formula}

For a partition $\mu = (\mu_1, \dots, \mu_{l(\mu)})$ of $d>0$,
denote by $H_{g, \mu}$ the Hurwitz number of branched coverings of
$\bP^1$ of type $\mu$ by genus $g$ Riemann surfaces.
In general,
these numbers can be computed by the Burnside formula:
\be \label{eqn:Burnside}
\exp\sum_{\mu \neq \emptyset}
\sum_{g \geq 0} \frac{\lambda^{2g-2+l(\mu)+|\mu|}}{(2g-2+l(\mu)+|\mu|)!} H_{g, \mu} p_\mu
= \sum_\nu \frac{\dim R_\nu}{|\nu|!} \cdot e^{\kappa_\nu \lambda/2} \cdot s_\nu.
\ee
Here  $p_\mu = \prod_{i=1}^{l(\mu)} p_{\mu_i}$ are the Newton functions,
$s_\nu$ are the Schur functions.
They are related to each other
by the characters of irreducible representations of the symmetric groups:
\be \label{eqn:SchurNewton}
s_\mu = \sum_\nu  \frac{\chi_\mu(\nu)}{z_\nu} p_\nu, \;\;\;\;
p_\nu = \sum_\mu \chi_\mu(\nu) s_\mu,
\ee
where $\chi_\mu$ denotes the character of the irreducible representation $R_\mu$ indexed by $\mu$
and $\chi_\mu(\nu)$ denotes its value on the conjugacy class indexed by $\nu$.
For a partition $\mu = (\mu_1, \dots, \mu_l)$,
the number $\kappa_\mu$ is defined as follows:
\be
\kappa_\mu = \sum_{i=1}^l \mu_i(\mu_i-2i+1).
\ee

\subsection{Hurwitz numbers and the cut-and-join equation}

Denote by $H^\bullet$ the left-hand side of \eqref{eqn:Burnside}.
It satisfies the following differential equation, called
the {\em  cut-and-join equation} \cite{Gou-Jac-Vai}:
\be \label{eqn:CutJoin}
\frac{\pd H^\bullet}{\partial\lambda} = KH^\bullet,
\ee
where
\be
K =\frac{1}{2}\sum_{i,j\geq 1}\left(ijp_{i+j}\frac{\pd^2}{\pd p_i\pd p_j}
+(i+j)p_ip_j\frac{\pd}{\pd p_{i+j}}\right).
\ee
This is because
\be
K s_\nu = \frac{1}{2}\kappa_\nu \cdot s_\nu.
\ee

\subsection{Hurwitz numbers and ELSV formula}

Hurwitz numbers are related to Hodge integrals on the Deligne-Mumford
moduli spaces by the ELSV formula \cite{ELSV}:
\be \label{eqn:ELSV}
H_{g, \mu} = \frac{1}{|\Aut(\mu)|} \prod_{i=1}^{l(\mu)} \frac{\mu_i^{\mu_i}}{\mu_i!}
\int_{\Mbar_{g,l(\mu)}} \frac{\Lambda_g^{\vee}(1)}{\prod_{i=1}^{l(\mu)} (1 - \mu_i \psi_i)},
\ee
where $\Lambda_g^{\vee}(1) = \sum_{i=0}^g (-1)^i \lambda_i$.

\subsection{Symmetrization}
One can also define
\ben
&& H_g(\bp)
= \sum_{\mu} \frac{1}{(2g-2+l(\mu)+|\mu|)!} H_{g, \mu}p_{\mu}.
\een
Because $H_g(p)$ is a formal power series in $p_1, p_2, \dots, p_n, \dots$,
for each $n$,
one can obtain from it a formal power series $\Phi_{g,n}(x_1, \dots, x_n)$
by applying the following linear symmetrization operator \cite{Gou-Jac-Vai}:
$$p_{\mu} \mapsto \delta_{l(\mu), n}\sum_{\sigma \in S_n}
x_{\sigma(1)}^{\mu_1} \cdots x_{\sigma(n)}^{\mu_n}.$$
Because
\ben
&& \sum_{\nu} e^{\kappa_\nu\lambda/2} \cdot \frac{\dim R_\nu}{|\nu|!} \cdot s_\nu
= 1 + \sum_{|\nu|\geq 1} e^{\kappa_\nu \lambda/2} \frac{\dim R_\nu}{|\nu|!} \sum_{\mu}
\frac{\chi_\nu(\mu)}{z_\mu} p_\mu,
\een
we have
\ben
&& \log \sum_{\nu} e^{\kappa_\nu\lambda/2} \frac{\dim R_\nu}{|\nu|!} \cdot s_\nu \\
& = & \sum_{k \geq 1} \frac{(-1)^{k-1}}{k}
\sum_{\substack{\mu^1 \cup \cdots \cup \mu^k = \mu\\ |\mu^1|, \dots, |\mu^k| > 0}}
\prod_{i=1}^k \sum_{|\nu^i| =|\mu^i|} \sum_{|\nu^i|=|\mu^i|}
e^{\kappa_{\nu^i}\lambda/2} \frac{\dim R_{\nu^i}}{|\nu^i|!}
\frac{\chi_{\nu^i}(\mu^i)}{z_{\mu^i}} \cdot p_\mu.
\een
It follows that
\be \label{eqn:Hgn}
\begin{split}
& \sum_{g \geq 0} \frac{\lambda^{2g-2+l(\mu)+|\mu|}}{(2g-2+l(\mu)+|\mu|)!} H_{g, \mu} p_\mu \\
= & \sum_{k \geq 1} \frac{(-1)^{k-1}}{k}
\sum_{\substack{\mu^1 \cup \cdots \cup \mu^k = \mu\\ |\mu^1|, \dots, |\mu^k| > 0}}
\prod_{i=1}^k \sum_{|\nu^i|=|\mu^i|}e^{\kappa_{\nu^i}/2} \frac{\dim R_{\nu^i}}{|\nu^i|!}
\frac{\chi_{\nu^i}(\mu^i)}{z_{\mu^i}} \cdot p_\mu.
\end{split}
\ee
After the symmetrization,
\be
\begin{split}
& \sum_{g \geq 0} \lambda^{2g-2+n} \Phi_{g,n}(x_1, \dots ,x_n) \\
= & \sum_{k \geq 1} \frac{(-1)^{k-1}}{k}
\sum_{\substack{l(\mu^1) + \cdots + l(\mu^k) = n \\ |\mu^1|, \dots, |\mu^k| > 0}}
\prod_{i=1}^k \sum_{|\nu^i|=|\mu^i|}e^{\kappa_{\nu^i}\lambda/2} \frac{\dim R_{\nu^i}}{|\nu^i|!} \cdot
\frac{\chi_{\nu^i}(\mu^i)}{z_{\mu^i}} \\
& \cdot \lambda^{-|\mu|} \sum_{\sigma \in S_n} x_{\sigma(1)}^{\mu_1} \cdots x_{\sigma(n)}^{\mu_n},
\end{split}
\ee
where $\mu = (\mu_1, \dots, \mu_n) = \mu^1 \cup \cdots \cup \mu^k$.
In particular,
\be \label{eqn:F}
\begin{split}
& \frac{1}{n!} \sum_{g \geq 0} \lambda^{2g-2+n}  \Phi_{g,n}(x, \dots ,x) \\
= & \sum_{k \geq 1} \frac{(-1)^{k-1}}{k}
\sum_{\substack{\sum_{i=1}^k l(\mu^i) = n \\ |\mu^1|, \dots, |\mu^k| > 0}}
\prod_{i=1}^k \sum_{|\nu^i|=|\mu^i|}e^{\kappa_{\nu^i}\lambda/2}
\frac{\dim R_{\nu^i}}{|\nu^i|!}
\frac{\chi_{\nu^i}(\mu^i)}{z_{\mu^i}} (\frac{x}{\lambda})^{|\mu^i|}.
\end{split}
\ee

\subsection{Computation of a partition function}

\begin{proposition}
Let
\be
Z =
\exp \sum_{n \geq 1} \frac{1}{n!} \sum_{g \geq 0} \lambda^{2g-2+n}
\Phi_{g,n}(x, \dots ,x).
\ee
Then one has
\be
Z = \sum_{n=0}^\infty e^{n(n-1)\lambda/2} \frac{x^n}{n!\lambda^n}.
\ee
\end{proposition}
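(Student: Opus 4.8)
The plan is to sum the identity \eqref{eqn:F} over all $n \geq 1$, recognize the result as a plethystic logarithm, and then collapse the resulting character sum by orthogonality. First I would abbreviate
$$ g(\mu) = \sum_{|\nu|=|\mu|} e^{\kappa_\nu \lambda /2}\, \frac{\dim R_\nu}{|\nu|!}\, \frac{\chi_\nu(\mu)}{z_\mu}\, \Bigl(\frac{x}{\lambda}\Bigr)^{|\mu|}, $$
so that the right-hand side of \eqref{eqn:F} reads $\sum_{k\geq 1}\frac{(-1)^{k-1}}{k}\sum \prod_{i=1}^k g(\mu^i)$, the inner sum being over $k$-tuples with $\sum_i l(\mu^i)=n$ and each $|\mu^i|>0$. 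Since $\log Z$ is by definition the sum of the left-hand sides of \eqref{eqn:F} over $n\geq 1$, summing the right-hand sides over $n$ removes the length constraint entirely: every tuple $(\mu^1,\dots,\mu^k)$ with all $|\mu^i|>0$ contributes for the single value $n=\sum_i l(\mu^i)\geq k\geq 1$. The sum then factorizes, giving
$$ \log Z = \sum_{k\geq 1}\frac{(-1)^{k-1}}{k}\, f^{k} = \log(1+f),\qquad f := \sum_{|\mu|>0} g(\mu), $$
so that $Z=1+f$ and the problem reduces to evaluating the single series $f$.

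Next I would reorganize $f$ by the common weight $m=|\mu|=|\nu|$, pulling out the factor $(x/\lambda)^m$ (which depends on $\mu$ only through $|\mu|$) and interchanging the sums over $\mu$ and $\nu$:
$$ f = \sum_{m\geq 1}\Bigl(\frac{x}{\lambda}\Bigr)^m \sum_{|\nu|=m} e^{\kappa_\nu\lambda/2}\,\frac{\dim R_\nu}{m!}\sum_{|\mu|=m}\frac{\chi_\nu(\mu)}{z_\mu}. $$
The key step is the evaluation $\sum_{|\mu|=m}\frac{\chi_\nu(\mu)}{z_\mu}=\delta_{\nu,(m)}$. The cleanest route is to specialize the relation $s_\nu=\sum_\mu \frac{\chi_\nu(\mu)}{z_\mu}p_\mu$ from \eqref{eqn:SchurNewton} to the single variable $x_1=1$ with all other variables zero; then every $p_k$ equals $1$, hence $p_\mu=1$, and the left-hand side becomes $s_\nu(1,0,0,\dots)$, which equals $1$ when $\nu$ is the one-row partition $(m)$ and $0$ otherwise. (Equivalently, $\sum_{|\mu|=m}\frac{p_\mu}{z_\mu}=h_m=s_{(m)}$, so the inner sum is $\langle s_\nu, s_{(m)}\rangle=\delta_{\nu,(m)}$.) Thus only $\nu=(m)$ survives.

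Finally I would substitute $\nu=(m)$ into the surviving factors. The one-row partition indexes the trivial representation, so $\dim R_{(m)}=1$, and from the definition $\kappa_\mu=\sum_i\mu_i(\mu_i-2i+1)$ one reads off $\kappa_{(m)}=m(m-1)$. Hence
$$ f = \sum_{m\geq 1} e^{m(m-1)\lambda/2}\,\frac{x^m}{m!\,\lambda^m}, $$
and adding the constant term $1$ (the $m=0$ summand, where $\kappa=0$) yields $Z=1+f=\sum_{n=0}^\infty e^{n(n-1)\lambda/2}\frac{x^n}{n!\lambda^n}$, as claimed. I expect the only delicate point to be the bookkeeping in the first paragraph—verifying that summing \eqref{eqn:F} over $n$ genuinely frees the partition blocks from the length constraint and produces an honest power $f^k$; once that is settled, the character orthogonality collapses the sum instantly and the remainder is mere substitution.
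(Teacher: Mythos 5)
Your proof is correct and takes essentially the same route as the paper: summing \eqref{eqn:F} over $n \geq 1$ collapses the $\sum_k \frac{(-1)^{k-1}}{k}(\cdot)^k$ structure to give $Z = 1+f$, the identity $\sum_{|\mu|=m} \chi_\nu(\mu)/z_\mu = \delta_{\nu,(m)}$ kills all but the one-row partitions, and substituting $\dim R_{(m)}=1$, $\kappa_{(m)}=m(m-1)$ finishes. The only difference is that you make explicit two steps the paper leaves silent — the bookkeeping showing the $n$-sum frees the length constraint and produces an honest power $f^k$, and the justification of the delta identity via $\sum_{|\mu|=m} p_\mu/z_\mu = h_m = s_{(m)}$ — both of which are correct.
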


\begin{proof}
By \eqref{eqn:F} we have
\ben
Z
%%% & = & \exp \biggl( \sum_{k \geq 1} \frac{(-1)^{k-1}}{k}
%%% \sum_{|\mu^1|, \dots, |\mu^k| > 0}
%%% \prod_{i=1}^k \sum_{|\nu^i|=|\mu^i|}e^{\kappa_{\nu^i}\lambda/2}
%%% \frac{\dim R_{\nu^i}}{|\nu^i|!} \cdot
%%% \frac{\chi_{\nu^i}(\mu^i)}{z_{\mu^i}} (x/\lambda)^{|\mu^i|} \biggr) \\
& = & 1+ \sum_{|\mu| > 0}
\sum_{|\nu|=|\mu|} e^{\kappa_{\nu}\lambda/2} \frac{\dim R_{\nu}}{|\nu|!}
\frac{\chi_{\nu}(\mu)}{z_{\mu}} \cdot (x/\lambda)^{|\mu|}  \\
& = & 1+ \sum_{n=1}^\infty \sum_{|\nu| = n} (x/\lambda)^{|\nu|}
e^{\kappa_{\nu}\lambda/2} \frac{\dim R_{\nu}}{|\nu|!}
\sum_{|\mu|=n} \frac{\chi_{\nu}(\mu)}{z_{\mu}}  \\
& = & 1+ \sum_{n=1}^\infty \sum_{|\nu| = n} (x/\lambda)^{|\nu|}
e^{\kappa_{\nu}\lambda/2} \frac{\dim R_{\nu}}{|\nu|!} \delta_{\nu, (n)}  \\
& = & 1+ \sum_{n=1}^\infty  (x/\lambda)^n
e^{\kappa_{(n)}\lambda/2} \frac{\dim R_{(n)}}{n!} \\
& = & \sum_{n=0}^\infty e^{n(n-1)\lambda/2} \frac{x^n}{n!\lambda^n}.
\een
\end{proof}

\subsection{Differential equation satisfied by the partition function}
Write $Z= a_0 + a_1 + \cdots$, where
\be
a_n = e^{n(n-1)\lambda/2} \frac{x^n}{n!\lambda^n}.
\ee
Then one has
\be
\frac{a_{n+1}}{a_n} = \frac{x}{(n+1)\lambda} e^{n\lambda},
\ee
and so
\be
(n+1)\lambda a_{n+1} - x e^{n\lambda} a_n = 0.
\ee
By summing over $n$,
one gets:

\begin{theorem}
For the partition function associated with the Hurwitz numbers
define above,
the following equation is satisfied:
\be
(\hat{y} - \hat{x} e^{\hat{y}} ) Z = 0,
\ee
where
\be
\hat{x} = x \cdot, \;\;\; \hat{y} = \lambda x \frac{\pd}{\pd x}.
\ee
\end{theorem}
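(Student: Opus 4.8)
The plan is to exploit the fact that the operator $\hat{y} = \lambda x\,\pd/\pd x$ acts diagonally on the monomials building $Z$. First I would apply $\hat{y}$ to a single term $a_n = e^{n(n-1)\lambda/2} x^n/(n!\lambda^n)$; since $\hat{y}$ is, up to the factor $\lambda$, the Euler operator $x\,\pd/\pd x$, and $a_n$ is a scalar (in $x$) times $x^n$, one gets $\hat{y}\, a_n = n\lambda\, a_n$. Thus each $a_n$ is an eigenvector of $\hat{y}$ with eigenvalue $n\lambda$, and consequently the exponential operator acts on $a_n$ by the scalar $e^{n\lambda}$, i.e. $e^{\hat{y}} a_n = e^{n\lambda} a_n$.

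With these two facts in hand, I would rewrite the elementary recursion $(n+1)\lambda\, a_{n+1} - x e^{n\lambda} a_n = 0$ already derived above in operator form. The left-hand side is exactly $\hat{y}\, a_{n+1}$, while the right-hand side is $\hat{x}\, e^{\hat{y}} a_n$, because $\hat{x}$ is multiplication by $x$ and $e^{\hat{y}} a_n = e^{n\lambda} a_n$. Hence the recursion reads precisely $\hat{y}\, a_{n+1} = \hat{x}\, e^{\hat{y}} a_n$ for every $n \geq 0$.

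The final step is to sum over $n$ and reconcile the two index ranges. Applying $\hat{y}$ termwise gives $\hat{y} Z = \sum_{n\geq 0} n\lambda\, a_n$, while applying $\hat{x} e^{\hat{y}}$ termwise and invoking the operator recursion gives $\hat{x} e^{\hat{y}} Z = \sum_{n \geq 0}\hat{y}\, a_{n+1} = \sum_{n \geq 0}(n+1)\lambda\, a_{n+1}$. Reindexing the latter sum by $m = n+1$ turns it into $\sum_{m \geq 1} m\lambda\, a_m$, which equals $\sum_{m \geq 0} m\lambda\, a_m$ since the $m=0$ term vanishes. The two sums therefore coincide, yielding $(\hat{y} - \hat{x} e^{\hat{y}})Z = 0$.

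The only point requiring care is the legitimacy of applying the infinite-order operator $e^{\hat{y}} = \sum_k \hat{y}^k/k!$ term by term to the series $Z$ and of interchanging it with the summation over $n$; I expect this to be the main, though mild, obstacle. Since $\hat{y}$ is diagonal on the $a_n$ and $Z$ is treated as a series in $x$ with coefficients involving $\lambda$, each $e^{\hat{y}} a_n = e^{n\lambda} a_n$ is a single well-defined term, so no convergence subtlety beyond that of $Z$ itself intervenes. Once this is noted the computation collapses to the diagonalization and reindexing described above.
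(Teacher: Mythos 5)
Your proof is correct and follows essentially the same route as the paper: the paper derives the term ratio $a_{n+1}/a_n$, obtains the recursion $(n+1)\lambda\, a_{n+1} - x e^{n\lambda} a_n = 0$, and then "sums over $n$," which is exactly your observation that $\hat{y}$ acts diagonally with $\hat{y}\,a_n = n\lambda\,a_n$ and $e^{\hat{y}} a_n = e^{n\lambda} a_n$. You have merely filled in the reindexing and termwise-application details that the paper leaves implicit.
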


This established the  Lambert curve case of
Gukov-Su{\l}kowski conjecture \cite{Guk-Sul}.
Recall that Bouchard and Mari\~no \cite{Bou-Mar} conjectured that
\be
W_{g,n}(x_1, \dots, x_n) = \pd_{x_1} \cdots \pd_{x_n}
\Phi_{g,n}(x_1, \dots, x_n) dx_1 \cdots dx_n
\ee
satisfies the Eynard-Orantin recursion
defined by the Lambert curve:
\be
A(x,y) = y - xe^y = 0.
\ee
This is sort of a limiting case of the $\bC^3$ case of the BKMP conjecture \cite{BKMP}.
This conjecture has been proved by Borot-Eynard-Safnuk-Mulase \cite{BEMS}
and Eynard-Safnuk-Mulase \cite{EMS} by two different methods.
Our result shows that the quantization
of $A(x,y)$ does not involve higher order quantum corrections
in this case.

\section{Mari\~no-Vafa Formula and Quantum Mirror Curve of $\bC^3$}

\subsection{Mari\~no-Vafa formula}

For a partition $\mu = (\mu_1, \dots, \mu_{l(\mu)})$,
consider the triple Hodge integral:
\begin{eqnarray*}
\cC_{g, \mu}(a)& = & - \frac{\sqrt{-1}^{|\mu|+l(\mu)}}{|\Aut(\mu)|}
 (a(a+1))^{l(\mu)-1}
\prod_{i=1}^{l(\mu)}\frac{ \prod_{a=1}^{\mu_i-1} (\mu_i a+a)}{(\mu_i-1)!} \\
&& \cdot \int_{\Mbar_{g, l(\mu)}}
\frac{\Lambda^{\vee}_g(1)\Lambda^{\vee}_g(-a-1)\Lambda_g^{\vee}(a)}
{\prod_{i=1}^{l(\mu)}(1- \mu_i \psi_i)}.
\een
Note when $l(\mu) \geq 3$,
we have
\be
\int_{0, l(\mu)} \frac{\Lambda^\vee_0(1) \Lambda_0^\vee(a)\Lambda_0^\vee(-1-a)}
{\prod\limits_{i=1}^{l(\mu)} (1-\mu_i \psi_i)}
= \int_{0, l(\mu)} \frac{1}{\prod\limits_{i=1}^{l(\mu)} (1-\mu_i \psi_i)}
= |\mu|^{l(\mu)-3},
\ee
We use this to extend the definition to the case of $(g,n) = (0,1)$ and $(0,2)$.
The Mari\~no-Vafa formula \cite{Mar-Vaf, LLZ, Oko-Pan}  states that
\be
\sum_{|\mu|\geq 1} \sum_{g \geq 0} \lambda^{2g-2+l(\mu)}\cC_{g, \mu}(a) p_\mu
= \log \sum_{|\nu|\geq 0} q^{a\kappa_\nu\tau/2} \sqrt{-1}^{|\nu|} s_{\nu}(q^\rho) s_\nu,
\ee
where $q=e^{\sqrt{-1}\lambda}$,
$s_\nu(q^\rho) = s_\nu(q^{-1/2}, q^{-3/2}, \dots)$.
Equivalently,
\be \label{eqn:Cgn}
\begin{split}
& \sum_{g \geq 0} \lambda^{2g-2+l(\mu)}\cC_{g, \mu}(a)\\
= & \sum_{k \geq 1} \frac{(-1)^{k-1}}{k}
\sum_{\substack{\mu^1 \cup \cdots \cup \mu^k = \mu\\ |\mu^1|, \dots, |\mu^k| > 0}}
\prod_{i=1}^k \sum_{|\nu^i|=|\mu^i|}q^{a\kappa_{\nu^i}/2} \sqrt{-1}^{|\nu|} s_{\nu^i}(q^\rho)
\frac{\chi_{\nu^i}(\mu^i)}{z_{\mu^i}}.
\end{split}
\ee

\subsection{Symmetrization}
For fixed $g \geq 0$ and $n \geq 1$,
define
\ben
&& \cC_{g,n}(p;a)
= \sum_{l(\mu) = n} \cC_{g, \mu}(a) p_{\mu}.
\een
Because $\cC_{g,n}(p;a)$ is a formal power series in $p_1, p_2,  \dots$,
for each $n$,
one can obtain from it a formal power series $\Phi_{g,n}(x_1, \dots, x_n; a)$
by applying the following linear symmetrization operator \cite{Gou-Jac-Vai, Che}:
$$p_{\mu} \mapsto (\sqrt{-1})^{-(n+|\mu|)}
\delta_{l(\mu), n}\sum_{\sigma \in S_n}
x_{\sigma(1)}^{\mu_1} \cdots x_{\sigma(n)}^{\mu_n}.$$
Hence by \eqref{eqn:Cgn} we have
\ben
&& \sum_{g \geq 0} \lambda^{2g-2+n} \Phi_{g,n}(x_1, \dots ,x_n; a) \\
& = & \sum_{k \geq 1} \frac{(-1)^{k-1}}{k}
\sum_{\substack{l(\mu^1) + \cdots + l(\mu^k) = n \\ |\mu^1|, \dots, |\mu^k| > 0}}
\prod_{i=1}^k \sum_{|\nu^i|=|\mu^i|}q^{a\kappa_{\nu^i}/2} \sqrt{-1}^{|\nu|} s_{\nu^i}(q^\rho)
\frac{\chi_{\nu^i}(\mu^i)}{z_{\mu^i}} \\
&& \cdot (\sqrt{-1})^{-(n+|\mu^1|+\cdots +|\mu^k|)}
\sum_{\sigma \in S_n} x_{\sigma(1)}^{\mu_1} \cdots x_{\sigma(n)}^{\mu_n},
\een
where $\mu = (\mu_1, \dots, \mu_n) = \mu^1 \cup \cdots \cup \mu^k$.
In particular,
\be \label{eqn:FC3}
\begin{split}
& \frac{1}{n!} \sum_{g \geq 0} \lambda^{2g-2+n} \sqrt{-1}^n \Phi_{g,n}(x, \dots ,x; a) \\
= & \sum_{k \geq 1} \frac{(-1)^{k-1}}{k}
\sum_{\substack{l(\mu^1) + \cdots + l(\mu^k) = n \\ |\mu^1|, \dots, |\mu^k| > 0}}
\prod_{i=1}^k \sum_{|\nu^i|=|\mu^i|}q^{a\kappa_{\nu^i}/2} s_{\nu^i}(q^\rho)
\frac{\chi_{\nu^i}(\mu^i)}{z_{\mu^i}} \\
& \cdot  x^{|\mu^1|+ \cdots +|\mu^k|}.
\end{split}
\ee

\subsection{Computation of the partition function}

\begin{proposition}
For the framed local $\bC^3$,
define
\be
Z = \exp \sum_{n \geq 1} \frac{1}{n!} \sum_{g \geq 0} (-1)^{g-1+n}
\lambda^{2g-2+n}\Phi_{g,n}(x, \dots ,x; a).
\ee
Then one has
\be
Z =  \sum_{n=0}^\infty \frac{e^{-an(n-1)\lambda/2+n\lambda/2}}{\prod_{j=1}^n (1 - e^{j\lambda})}x^n.
\ee
\end{proposition}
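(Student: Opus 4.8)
The plan is to follow the template of the preceding Proposition for the Hurwitz numbers. The conceptual point is that the operation ``symmetrize, set all $x_i=x$, weight by $\tfrac1{n!}$, and sum over $n$'' is (up to signs) an algebra homomorphism $\phi$ from the ring of symmetric functions in the power sums $p_k$ to power series in $x$; since $\phi$ is continuous and multiplicative, it commutes with $\exp$, so that $Z=\exp\phi(\log W)=\phi(W)$, where $W=\sum_{|\nu|\geq 0} q^{a\kappa_\nu\tau/2}\sqrt{-1}^{\,|\nu|} s_\nu(q^\rho)\, s_\nu$ is the Mari\~no--Vafa side. Concretely, one reads off from \eqref{eqn:FC3} that the factor $\tfrac{(-1)^{k-1}}{k}$ together with the sum over ordered decompositions $\mu^1\cup\cdots\cup\mu^k$ is exactly the expansion of a logarithm, so exponentiating collapses the multi-component sum to a single sum over one partition $\nu$, precisely as in the passage from \eqref{eqn:F} to the first line of that proof.

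The step that needs genuine care, and which I expect to be the main obstacle, is the bookkeeping of the signs and powers of $\sqrt{-1}$: this is what must turn $q=e^{\sqrt{-1}\lambda}$ into $e^{-\lambda}$ and hence produce the factors $1-e^{j\lambda}$ in the answer rather than $1-e^{\sqrt{-1}j\lambda}$. The key identity is $(-1)^{g-1+n}\lambda^{2g-2+n}=\sqrt{-1}^{\,n}(\sqrt{-1}\lambda)^{2g-2+n}$, which holds because $\sqrt{-1}^{\,2g-2+2n}=(-1)^{g-1+n}$. Thus the extra sign $(-1)^{g-1+n}$ in the definition of $Z$ has exactly the effect of replacing $\lambda$ by $\sqrt{-1}\lambda$ throughout \eqref{eqn:FC3}, hence of replacing $q$ by $e^{\sqrt{-1}(\sqrt{-1}\lambda)}=e^{-\lambda}=:Q$. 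In parallel I would check that the symmetrization $p_\mu\mapsto(\sqrt{-1})^{-(n+|\mu|)}\delta_{l(\mu),n}\sum_{\sigma\in S_n} x_{\sigma(1)}^{\mu_1}\cdots x_{\sigma(n)}^{\mu_n}$, followed by $x_i=x$ and $\tfrac1{n!}$, is the homomorphism $\phi:p_k\mapsto(x/\sqrt{-1})^k$, which justifies commuting $\exp$ with the specialization.

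With these reductions, the remainder is routine. Under $\phi$ one has $\phi(s_\nu)=(x/\sqrt{-1})^{|\nu|}\,\delta_{\nu,(|\nu|)}$, since $s_\nu$ evaluated at a single variable (equivalently $\sum_{|\mu|=n}\chi_\nu(\mu)/z_\mu=\delta_{\nu,(n)}$) kills every $\nu$ except the single row $(n)$, and the resulting $(1/\sqrt{-1})^{|\nu|}$ cancels the $\sqrt{-1}^{\,|\nu|}$ in $W$. This leaves $Z=\sum_{n\geq 0} Q^{a\kappa_{(n)}/2}\, s_{(n)}(Q^\rho)\, x^n$. I would then insert $\kappa_{(n)}=n(n-1)$, so that $Q^{an(n-1)/2}=e^{-an(n-1)\lambda/2}$, and evaluate the principal specialization $s_{(n)}(Q^\rho)=h_n(e^{\lambda/2},e^{3\lambda/2},e^{5\lambda/2},\dots)$ by Euler's identity $\prod_{i\geq 0}(1-e^{i\lambda}u)^{-1}=\sum_{n\geq 0}u^n/\prod_{j=1}^n(1-e^{j\lambda})$ with $u=e^{\lambda/2}$, giving $s_{(n)}(Q^\rho)=e^{n\lambda/2}/\prod_{j=1}^n(1-e^{j\lambda})$. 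Assembling these two factors yields $Z=\sum_{n=0}^\infty e^{-an(n-1)\lambda/2+n\lambda/2}/\prod_{j=1}^n(1-e^{j\lambda})\,x^n$, as claimed.
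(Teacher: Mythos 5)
Your proposal is correct and takes essentially the same route as the paper's proof: exponentiation collapses the logarithmic expansion in \eqref{eqn:FC3} to a single sum over partitions, the identity $\sum_{|\mu|=n}\chi_\nu(\mu)/z_\mu=\delta_{\nu,(n)}$ reduces it to one-row partitions, the principal specialization $s_{(n)}$ gives the $q$-factorial denominator, and the sign $(-1)^{g-1+n}$ is absorbed as the substitution $\lambda\to\sqrt{-1}\lambda$. The only (cosmetic) differences are that you perform this substitution at the outset and work with $Q=e^{-\lambda}$ throughout, deriving $s_{(n)}(Q^\rho)=e^{n\lambda/2}/\prod_{j=1}^n(1-e^{j\lambda})$ from Euler's identity, whereas the paper computes in $q$, quotes $s_{(n)}(q^\rho)=q^{n(n-1)/4}/[n]!$, and converts to $\lambda$ only in the last line.
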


\begin{proof}
By \eqref{eqn:FC3} we have
\ben
&& \exp \sum_{n \geq 1} \frac{1}{n!} \sum_{g \geq 0} \sqrt{-1}^n
\lambda^{2g-2+n}\Phi_{g,n}(x, \dots ,x; a) \\
& = & \exp \biggl( \sum_{k \geq 1} \frac{(-1)^{k-1}}{k}
\sum_{|\mu^1|, \dots, |\mu^k| > 0}
\prod_{i=1}^k \sum_{|\nu^i|=|\mu^i|}q^{a\kappa_{\nu^i}/2} s_{\nu^i}(q^\rho)
\frac{\chi_{\nu^i}(\mu^i)}{z_{\mu^i}}x^{|\mu^i|} \biggr) \\
& = & 1 + \sum_{|\mu| > 0}
\sum_{|\nu|=|\mu|}q^{a\kappa_{\nu}/2} s_{\nu}(q^\rho)
\frac{\chi_{\nu}(\mu)}{z_{\mu}} \cdot x^{|\mu|}   \\
& = &  1+ \sum_{n=1}^\infty \sum_{|\nu| = n} x^{|\nu|}
q^{a\kappa_{\nu}/2} s_{\nu}(q^\rho)
\sum_{|\mu|=n} \frac{\chi_{\nu}(\mu)}{z_{\mu}}  \\
& = & 1 + \sum_{n=1}^\infty \sum_{|\nu| = n} x^{|\nu|}
q^{a\kappa_{\nu}/2} s_{\nu}(q^\rho) \delta_{\nu, (n)}  \\
& = &  1 + \sum_{n=1}^\infty x^n
q^{a\kappa_{(n)}/2} s_{(n)}(q^\rho)
=  \sum_{n=0}^\infty \frac{q^{(2a+1)n(n-1)/4}}{[n]!} x^n.
\een
In the last equality we have used the following identity:
\be
s_{(n)}(q^{\rho})
= \frac{q^{n(n-1)/4}}{[n]!}.
\ee
The proof is completed by changing $\lambda$ to $\sqrt{-1}\lambda$:
\ben
Z & = & \sum_{n=0}^\infty
\frac{e^{-(2a+1)n(n-1)\lambda/4}}{\prod_{j=1}^n (e^{-j\lambda/2} - e^{j\lambda/2})}x^n
= \sum_{n=0}^\infty \frac{e^{-an(n-1)\lambda/2+n\lambda/2}}{\prod_{j=1}^n (1 - e^{j\lambda})}x^n.
\een
\end{proof}

\subsection{Differential equation satisfied by the partition function}
Write $Z= a_0 + a_1 + \cdots$, where
\be
a_n = e^{-an(n-1)\lambda/2+n\lambda/2} \frac{x^n}{\prod_{j=1}^n (1-e^{j\lambda})}.
\ee
Then one has
\be
\frac{a_{n+1}}{a_n} = \frac{e^{\lambda/2}x}{1- e^{(n+1)\lambda}} e^{-an\lambda},
\ee
and so
\be
(1-e^{(n+1)\lambda}) a_{n+1} -e^{\lambda/2} x e^{-an\lambda} a_n = 0.
\ee
By summing over $n$,
one gets:

\begin{theorem}
For the partition function associated with the Hurwitz numbers
define above,
the following equation is satisfied:
\be
(1- \hat{y} - e^{\lambda/2}\hat{x}\hat{y}^{-a} ) Z = 0,
\ee
where
\be
\hat{x} = x \cdot, \;\;\; \hat{y} = e^{\lambda x \frac{\pd}{\pd x}}.
\ee
\end{theorem}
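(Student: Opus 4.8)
The plan is to convert the scalar recursion $(1-e^{(n+1)\lambda})a_{n+1} - e^{\lambda/2} x\, e^{-an\lambda} a_n = 0$ derived just above the statement into the claimed operator identity, by reading off how $\hat{x}$ and $\hat{y}$ act on the homogeneous pieces $a_n$ and then summing over $n$. The whole point is that each $a_n$ is homogeneous of degree $n$ in $x$, so the differential operator $\hat{y}$ acts on it by a scalar.

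First I would pin down the action of $\hat{y} = e^{\lambda x \pd/\pd x}$. Since $x\,\pd/\pd x$ is the degree operator, with $x\,\pd/\pd x\, (x^n) = n x^n$, exponentiation gives $\hat{y}\,(x^n) = e^{n\lambda} x^n$; equivalently $\hat{y}$ is the dilation $f(x) \mapsto f(e^\lambda x)$. Because $a_n$ is homogeneous of degree $n$, this yields $\hat{y} a_n = e^{n\lambda} a_n$ and, more generally, $\hat{y}^{-a} a_n = e^{-an\lambda} a_n$, while multiplication $\hat{x} a_n = x a_n$ simply raises the degree by one. With these rules the two terms of the recursion become operator expressions: $(1-\hat{y}) a_{n+1} = (1 - e^{(n+1)\lambda}) a_{n+1}$ and $e^{\lambda/2}\hat{x}\hat{y}^{-a} a_n = e^{\lambda/2} x\, e^{-an\lambda} a_n$, so the recursion reads $(1-\hat{y})a_{n+1} - e^{\lambda/2}\hat{x}\hat{y}^{-a}a_n = 0$ for every $n \geq 0$.

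Next I would sum this over $n \geq 0$ and shift the index in the first sum, obtaining $(1-\hat{y})\sum_{m\geq 1} a_m - e^{\lambda/2}\hat{x}\hat{y}^{-a}\sum_{n\geq 0}a_n = 0$. The one point needing care is the boundary term: the first sum misses $a_0 = 1$, but $(1-\hat{y})a_0 = 0$ since $\hat{y}$ fixes degree-zero elements, hence $(1-\hat{y})\sum_{m\geq 1}a_m = (1-\hat{y})Z$. Recognizing both remaining sums as $Z$ then gives $(1-\hat{y} - e^{\lambda/2}\hat{x}\hat{y}^{-a})Z = 0$, exactly as stated.

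I expect there to be no genuinely hard step here once the dilation interpretation of $\hat{y}$ is in hand; the main thing to watch is the index bookkeeping in the shifted sum and the vanishing of the $n=0$ boundary contribution under the operator $1-\hat{y}$. One could also note that the same scheme is what underlies the Lambert-curve theorem of the previous section, with $\hat{y} = e^{\lambda x \pd/\pd x}$ here playing the role that $\hat{y} = \lambda x\,\pd/\pd x$ played there.
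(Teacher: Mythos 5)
Your proposal is correct and follows exactly the paper's own route: the paper derives the same recursion $(1-e^{(n+1)\lambda})a_{n+1} - e^{\lambda/2}x\,e^{-an\lambda}a_n = 0$ and obtains the theorem "by summing over $n$," which is precisely the step you carry out, supplying the details (the dilation action $\hat{y}\,x^n = e^{n\lambda}x^n$, the index shift, and the vanishing boundary term $(1-\hat{y})a_0=0$) that the paper leaves implicit.
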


This proves the Gukov-Su{\l}kowski conjecture \cite{Guk-Sul}
for the framed mirror curve of $\bC^3$.
Recall the BKMP conjecture for $\bC^3$ (see \cite{Bou-Mar})
has been proved \cite{Che, ZhoMV}.
More precisely,
\ben
&& W_{g,n}(x_1, \dots, x_n) = (-1)^{g-1+n} \pd_{x_1} \cdots \pd_{x_n}
\Phi_{g,n}(x_1, \dots, x_n) dx_1 \cdots dx_n
\een
satisfies the Eynard-Orantin recursion
defined by the following algebraic curve (cf. \cite[(8)]{ZhoMV}):
\be
x - y^a + y^{a+1} = 0.
\ee
Our result shows that one needs to change the above equation to
\be
A(x,y) = 1 -y - xy^{-a} = 0
\ee
before taking the quantization.
Further,
higher order quantum corrections introduce an extra factor of $e^{\lambda/2}$ for $\hat{x}$,
i.e., one should take $\hat{x} = e^{\lambda/2} \cdot x$.

\section{Open String Invariants of the Resolved Conifold and Quantization
of its Mirror Curve}

\subsection{Open string amplitudes of the resolved conifold
with one special outer brane}

Let us first recall the result in \cite{ZhoFMV} about
the open string amplitude for the resolved conifold with one special outer brane and framing $a$
by the theory of the topological vertex \cite{AKMV, LLLZ},
corresponding to the following toric diagrams as follows:
$$
\xy
(-4,5); (0,10), **@{.}; (0,0), **@{-}; (-7, -7), **@{-};
(0,0); (20,0), **@{-}; (27,7), **@{-}; (20,0); (20, -10), **@{-};
(-2, 3)*+{\mu}; (5,1.5)*+{\nu^t}; (15,1)*+{\nu}; (11,-14)*+{\bf Figure\;\; 1.};
\endxy
$$
This is the Figure 1(b) case in \cite{ZhoFMV}.
The case of Figure 1(a) can be treated in a similar fashion.
We have
\ben
\tilde{Z}^{(a)}(\lambda; t; \bp)
= \sum_{\mu, \nu, \eta} q^{a \kappa_\mu/2}  C_{\mu, (0), \nu^t}(q) \cdot Q^{|\nu|} \cdot
C_{\nu,(0), (0)}(q) \cdot
\frac{\chi_{\mu}(\eta)}{z_{\eta}\sqrt{-1}^{l(\eta)}} p_{\eta}.
\een
where $q = e^{\sqrt{-1}\lambda}$ and $Q = - e^{-t}$.
The normalized open string amplitudes are defined by:
\begin{align*}
\hat{\tilde{Z}}^{(a)}(\lambda;t;\bp) = \frac{\tilde{Z}^{(a)}(\lambda;t; \bp)}{\tilde{Z}^{(a)}(\lambda;t; \bp)|_{\bp = {\bf 0}}}.
\end{align*}
Write
\be
\hat{\tilde{Z}}^{(a)}(\lambda;t;\bp)
= \exp \sum_{\mu \in \cP^+} \sum_{g=0}^\infty \sum_{k=0}^\infty \tilde{F}^{(a)}_{g; k; \mu} \lambda^{2g-2+l(\mu)} e^{-kt} p_{\mu},
\ee
where $\cP^+$ is the set of nonempty partitions.
The following formula is proved in \cite{ZhoFMV}
as a mathematical formulation of the full Mari\~no-Vafa Conjecture \cite{Mar-Vaf}:
\begin{multline} \label{eqn:TildeF}
\exp \sum_{\mu \in \cP^+} \sum_{g=0}^\infty  \sum_{k=0}^\infty \sqrt{-1}^{l(\mu)} \tilde{F}^{(a)}_{g; k; \mu}
\lambda^{2g-2+l(\mu)}
e^{(|\mu|/2-k)t} p_{\mu} \\
= \sum_\mu s_{\mu}\cdot  q^{a\kappa_\mu/2} \cdot \dim_q R_{\mu}.
\end{multline}
(Unfortunately $l(\mu)$ is missing from $\lambda^{2g-2+l(\mu)}$ in \cite{ZhoFMV}.)
In this formula,
$\dim_q R_\mu$ is the quantum dimension
that gives the  colored large N HOMFLY polynomials
of the unknot are given by the quantum dimension \cite[(5.4)]{Mar-Vaf}:
\be
\dim_q R_\mu = \prod_{1 \leq i < j \leq l(\mu)} \frac{[\mu_i-\mu_j+j-i]}{[j-i]}
\cdot \prod_{i=1}^{l(\mu)}
\frac{\prod_{j=1}^{\mu_i} [j-i]_{e^t}}{\prod_{j=1}^{\mu_i} [j-i+l(\mu)]}.
\ee
In \cite{ZhoFMV},
it is realized as a specialization of the Schur function as follows:
If for $n \geq 1$ one has
\be \label{eqn:Pn}
p_n(\by) = \frac{e^{nt/2} - e^{-nt/2}}{[n]},
\ee
then one has:
\be
s_\mu(\by) = \dim_q R_\mu =\prod_{x \in \mu} \frac{[c(x)]_{e^t } }{[h(x)]},
\ee
where
\be
[n]_{e^t} = e^{t/2} q^{n/2} - e^{-t/2}q^{-n/2}.
\ee
In particular,
\be
s_{[n]}(\by) = \prod_{j=1}^n \frac{e^{t/2}q^{(j-1)/2} -e^{-t/2}q^{-(j-1)/2}}{q^{j/2}-q^{-j/2}}.
\ee

\subsection{Symmetrization}

Taking logarithm on both sides of \eqref{eqn:TildeF},
we get:
\be
\begin{split}
& \sum_{g=0}^\infty   \sum_{\mu \in \cP^+}
 \sum_{k=0}^\infty \sqrt{-1}^{l(\mu)} \tilde{F}^{(a)}_{g; k; \mu} e^{-kt}
\lambda^{2g-2+l(\mu)} p_{\mu} \\
= & \sum_{k \geq 1} \frac{(-1)^{k-1}}{k}
\sum_{\substack{\mu^1 \cup \cdots \cup \mu^k = \mu\\ |\mu^1|, \dots, |\mu^k| > 0}}
\prod_{i=1}^k \sum_{|\nu^i|=|\mu^i|} e^{-|\nu^i|t/2}
q^{a\kappa_{\nu^i}/2} \dim_q R_{\nu^i}\\
& \cdot \frac{\chi_{\nu^i}(\mu^i)}{z_{\mu^i}} p_{\mu^i}.
\end{split}
\ee
We apply the following linear symmetrization operator:
$$p_{\mu} \mapsto (\sqrt{-1})^{-l(\mu)}
\sum_{\sigma \in S_{l(\mu)}}
x_{\sigma(1)}^{\mu_1} \cdots x_{\sigma(l(\mu))}^{\mu_{l(\mu)}}$$
to get:
\ben
&& \sum_{g \geq 0} \lambda^{2g-2+n} \Phi_{g,n}(x_1, \dots ,x_n) \\
& = & \sum_{k \geq 1} \frac{(-1)^{k-1}}{k}
\sum_{\substack{l(\mu^1) + \cdots + l(\mu^k) = n \\ |\mu^1|, \dots, |\mu^k| > 0}}
\prod_{i=1}^k \sum_{|\nu^i|=|\mu^i|}
 e^{-|\nu^i|t/2} q^{a\kappa_{\nu^i}/2} \dim_q R_{\nu^i} \\
&& \cdot \frac{\chi_{\nu^i}(\mu^i)}{z_{\mu^i}} \cdot
\sum_{\sigma \in S_n} x_{\sigma(1)}^{\mu_1} \cdots x_{\sigma(n)}^{\mu_n},
\een
where $\mu = (\mu_1, \dots, \mu_n) = \mu^1 \cup \cdots \cup \mu^k$.
In particular,
\be \label{eqn:Fcon}
\begin{split}
& \frac{1}{n!} \sum_{g \geq 0} \lambda^{2g-2+n} \Phi_{g,n}(x, \dots ,x) \\
= & \sum_{k \geq 1} \frac{(-1)^{k-1}}{k}
\sum_{\substack{l(\mu^1) + \cdots + l(\mu^k) = n \\ |\mu^1|, \dots, |\mu^k| > 0}}
\prod_{i=1}^k \sum_{|\nu^i|=|\mu^i|}
 e^{-|\nu^i|t/2} q^{a\kappa_{\nu^i}/2} \dim_q R_{\nu^i} \\
& \cdot \frac{\chi_{\nu^i}(\mu^i)}{z_{\mu^i}} x^{|\mu^i|}.
\end{split}
\ee

\subsection{Computation of the partition function}

\begin{proposition}
For the framed local $\bC^3$,
define
\be
Z = \exp \sum_{n \geq 1} \frac{1}{n!} \sum_{g \geq 0}
\lambda^{2g-2+n}\Phi_{g,n}(x, \dots ,x).
\ee
Then one has
\be
Z = \sum_{n=0}^\infty  \prod_{j=1}^n \frac{1-e^{-t}q^{-(j-1)}}{1-q^{-j}}
q^{an(n-1)/2-n/2} x^n.
n\ee
\end{proposition}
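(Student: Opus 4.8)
The plan is to follow exactly the same pattern used in the two preceding propositions (for the Lambert curve and for framed $\bC^3$), namely to collapse the exponentiated sum using the orthogonality of symmetric-group characters and then evaluate the resulting one-row specialization explicitly. Starting from \eqref{eqn:Fcon}, I would form the logarithm of $Z$ and observe that the alternating sum over $k$ with the constraint $\mu^1 \cup \cdots \cup \mu^k = \mu$ is precisely the combinatorial expansion of a logarithm; exponentiating therefore removes the sum over $k$ entirely and leaves
\[
Z = 1 + \sum_{|\mu| > 0} \sum_{|\nu| = |\mu|} e^{-|\nu|t/2} q^{a\kappa_\nu/2} \dim_q R_\nu \frac{\chi_\nu(\mu)}{z_\mu} x^{|\mu|}.
\]
This is the step where the $\exp$ and the $\log$ cancel, identical to the manipulation in the proof of the $\bC^3$ proposition.

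Next I would reorganize the double sum by fixing $n = |\mu| = |\nu|$ and summing over $\nu$ first, so that the inner sum becomes $\sum_{|\mu| = n} \chi_\nu(\mu)/z_\mu$. The key input is the character orthogonality relation already exploited twice above, which gives $\sum_{|\mu| = n} \chi_\nu(\mu)/z_\mu = \delta_{\nu, (n)}$. This forces $\nu$ to be the single-row partition $(n)$, so that
\[
Z = 1 + \sum_{n=1}^\infty x^n e^{-nt/2} q^{a\kappa_{(n)}/2} \dim_q R_{(n)}.
\]
Since $\kappa_{(n)} = n(n-1)$ by the definition of $\kappa_\mu$, the prefactor $q^{a\kappa_{(n)}/2}$ becomes $q^{an(n-1)/2}$, and I would then substitute the explicit single-row quantum dimension from the displayed formula for $s_{[n]}(\by)$, namely $\dim_q R_{(n)} = \prod_{j=1}^n \frac{e^{t/2}q^{(j-1)/2} - e^{-t/2}q^{-(j-1)/2}}{q^{j/2} - q^{-j/2}}$.

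The remaining work is purely algebraic: I would rewrite the stated target, whose coefficient is $\prod_{j=1}^n \frac{1 - e^{-t}q^{-(j-1)}}{1 - q^{-j}} \, q^{an(n-1)/2 - n/2}$, and check that it matches the product obtained from $\dim_q R_{(n)}$ together with the $e^{-nt/2}$ factor. Concretely, I would pull a factor of $e^{t/2}q^{(j-1)/2}$ out of each numerator and $q^{j/2}$ out of each denominator, which converts the quantum-integer quotient into $\prod_{j=1}^n \frac{1 - e^{-t}q^{-(j-1)}}{1 - q^{-j}}$ times an overall power of $e^{t/2}$ and of $q$; the $e^{t/2}$ powers should cancel against the $e^{-nt/2}$ prefactor, and the leftover $q$-power should collect into the $q^{-n/2}$ appearing in the target. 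The main obstacle, such as it is, lies entirely in this bookkeeping of the $q$- and $e^{t/2}$-exponents, since the conifold carries the extra Kähler-parameter factor $e^{-t}$ absent in the $\bC^3$ computation; one must be careful that the shift $q^{(j-1)/2}$ in the numerator produces exactly the exponent $-(j-1)$ in $e^{-t}q^{-(j-1)}$ after factoring, and that the telescoping of the remaining half-integer $q$-powers over $j = 1, \dots, n$ reproduces the claimed $-n/2$. No conceptual difficulty is expected beyond this, as every structural ingredient is already in place from the earlier propositions.
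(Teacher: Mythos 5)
Your proposal is correct and follows essentially the same route as the paper's own proof: exp/log cancellation of \eqref{eqn:Fcon}, character orthogonality $\sum_{|\mu|=n}\chi_\nu(\mu)/z_\mu = \delta_{\nu,(n)}$ to collapse to one-row partitions, $\kappa_{(n)}=n(n-1)$, and the explicit single-row quantum dimension. Your final factoring (pulling out $e^{t/2}q^{(j-1)/2}$ and $q^{j/2}$) even lands exactly on the form stated in the proposition, whereas the paper's last displayed line is the equivalent rearrangement $\prod_{j=1}^n \frac{e^{-t}-q^{j-1}}{1-q^j}\,q^{an(n-1)/2+n/2}x^n$.
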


\begin{proof}
By \eqref{eqn:Fcon} we have
\ben
&& \exp \sum_{n \geq 1} \frac{1}{n!} \sum_{g \geq 0}
\lambda^{2g-2+n}\Phi_{g,n}(x, \dots ,x) \\
%%% & = & \exp \biggl( \sum_{k \geq 1} \frac{(-1)^{k-1}}{k}
%%% \sum_{|\mu^1|, \dots, |\mu^k| > 0} \prod_{i=1}^k \sum_{|\nu^i|=|\mu^i|}
%%%  e^{-|\nu^i|t/2} q^{a\kappa_{\nu^i}/2} \dim_q R_{\nu^i}
%%% \frac{\chi_{\nu^i}(\mu^i)}{z_{\mu^i}}x^{|\mu^i|} \biggr) \\
& = & 1 + \sum_{|\mu| > 0}
\sum_{|\nu|=|\mu|} e^{-|\nu|t/2} q^{a\kappa_{\nu}/2} \dim_q R_{\nu}
\frac{\chi_{\nu}(\mu)}{z_{\mu}} \cdot x^{|\mu|}   \\
& = &  1+ \sum_{n=1}^\infty \sum_{|\nu| = n} x^{|\nu|}
 e^{-|\nu|t/2} q^{a\kappa_{\nu}/2} \dim_q R_{\nu}
\sum_{|\mu|=n} \frac{\chi_{\nu}(\mu)}{z_{\mu}}  \\
& = & 1 + \sum_{n=1}^\infty \sum_{|\nu| = n} x^{|\nu|}
 e^{-|\nu|t/2} q^{a\kappa_{\nu}/2} \dim_q R_{\nu} \delta_{\nu, (n)}  \\
& = &  1 + \sum_{n=1}^\infty x^n  e^{-nt/2}
q^{a\kappa_{(n)}/2} \dim_q R_{(n)} \\
& = & \sum_{n=0}^\infty  e^{-nt/2}\prod_{j=1}^n \frac{e^{t/2}q^{(j-1)/2} -e^{-t/2}q^{-(j-1)/2}}{q^{j/2}-q^{-j/2}} q^{an(n-1)/2} x^n \\
& = & \sum_{n=0}^\infty  \prod_{j=1}^n \frac{e^{-t}-q^{j-1}}{1-q^j}
\cdot q^{an(n-1)/2+n/2} x^n.
\een
\end{proof}

\subsection{Differential equation satisfied by the partition function}
Write $Z= a_0 + a_1 + \cdots$, where
\be
a_n =  \sum_{n=0}^\infty  \prod_{j=1}^n \frac{e^{-t}-q^{j-1}}{1-q^j}
\cdot q^{an(n-1)/2+n/2} x^n.
\ee
Then one has
\be
\frac{a_{n+1}}{a_n} = \frac{e^{-t}-q^n}{1- q^{n+1}} q^{an+1/2} x,
\ee
and so
\be
(1-q^{n+1}) a_{n+1} + x q^{(a+1)n+1/2} a_n - e^{-t} q^{an+1/2} x a_n = 0.
\ee
By summing over $n$,
one gets:

\begin{theorem}
For the partition function associated with the Hurwitz numbers
define above,
the following equation is satisfied:
\be
(1- \hat{y} + q^{1/2}\hat{x}\hat{y}^{a+1}
- q^{1/2}e^{-t} \hat{x} \hat{y}^{a} ) Z = 0,
\ee
where
\be
\hat{x} = x \cdot, \;\;\; \hat{y} = e^{-\sqrt{-1}\lambda x \frac{\pd}{\pd x}}.
\ee
\end{theorem}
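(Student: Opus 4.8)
The plan is to follow verbatim the three-step strategy already used above for the Lambert and $\bC^3$ curves: start from the closed form for $Z$ supplied by the preceding Proposition, reduce the whole statement to a single recurrence among the Taylor coefficients of $Z$ in $x$, and then recognize that recurrence as the vanishing of a $q$-difference operator applied to $Z$. Writing $Z=\sum_{n\ge 0}a_n$ with
\[
a_n=\prod_{j=1}^n\frac{e^{-t}-q^{j-1}}{1-q^j}\,q^{an(n-1)/2+n/2}\,x^n ,
\]
the entire content of the theorem is encoded in the ratio $a_{n+1}/a_n$.

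First I would compute that ratio. The product over $j$ telescopes, leaving only the $j=n+1$ factor $\tfrac{e^{-t}-q^n}{1-q^{n+1}}$; the quadratic $q$-exponent contributes $q^{an+1/2}$ (since $\tfrac12[a(n+1)n-an(n-1)]+\tfrac12=an+\tfrac12$); and the monomial contributes one factor of $x$. Hence
\[
\frac{a_{n+1}}{a_n}=\frac{e^{-t}-q^n}{1-q^{n+1}}\,q^{an+1/2}\,x ,
\]
and after clearing the denominator and expanding $e^{-t}-q^n$ this becomes the three-term relation
\[
(1-q^{n+1})\,a_{n+1}+q^{(a+1)n+1/2}\,x\,a_n-e^{-t}q^{an+1/2}\,x\,a_n=0,
\]
valid for every $n\ge 0$.

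The final step is to sum this identity over $n\ge 0$ and read each term as an operator acting on $Z$. Here $\hat{x}$ is multiplication by $x$, which raises the exponent of each monomial, while $\hat{y}$ acts diagonally, sending $x^n$ to $q^n x^n$; thus $\hat{y}^k$ supplies exactly the scalar $q^{kn}$ needed to reproduce the powers $q^{(a+1)n}$ and $q^{an}$. With this reading one has $\sum_{n\ge 0}(1-q^{n+1})a_{n+1}=(1-\hat{y})Z$ (the $n=0$ term of $(1-\hat{y})Z$ drops out after reindexing), $\sum_{n\ge 0}q^{(a+1)n+1/2}x\,a_n=q^{1/2}\hat{x}\hat{y}^{a+1}Z$, and $\sum_{n\ge 0}e^{-t}q^{an+1/2}x\,a_n=q^{1/2}e^{-t}\hat{x}\hat{y}^{a}Z$, so the summed recurrence is precisely $\big(1-\hat{y}+q^{1/2}\hat{x}\hat{y}^{a+1}-q^{1/2}e^{-t}\hat{x}\hat{y}^{a}\big)Z=0$.

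The one place demanding care is the ordering of $\hat{x}$ and $\hat{y}$: these operators do not commute but satisfy the Weyl-type relation $\hat{y}\hat{x}=q\,\hat{x}\hat{y}$, so the scalar produced by $q^{1/2}\hat{x}\hat{y}^{a+1}$ depends on writing the shift $\hat{y}^{a+1}$ to the \emph{right} of $\hat{x}$; it is exactly this bookkeeping that fixes the quantum prefactor $q^{1/2}$ in front of the two $\hat{x}\hat{y}$ terms, the conifold analogue of the $e^{\lambda/2}$ correction found in the $\bC^3$ case. I therefore expect the main (and only modest) obstacle to be this ordering/sign bookkeeping: one must choose the exponent in $\hat{y}=e^{\pm\sqrt{-1}\lambda x\,\partial/\partial x}$ consistently with $q=e^{\sqrt{-1}\lambda}$ so that $\hat{y}x^n=q^n x^n$, and then verify that the $q^{1/2}$ shift is absorbed correctly — rather than any genuine analytic difficulty, since the closed form for $Z$ already does all the real work.
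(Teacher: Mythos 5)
Your proposal is correct and follows essentially the same route as the paper's own proof: write $Z=\sum_{n\ge 0}a_n$ with the closed form from the preceding Proposition, compute $a_{n+1}/a_n=\frac{e^{-t}-q^n}{1-q^{n+1}}q^{an+1/2}x$, deduce the three-term recurrence $(1-q^{n+1})a_{n+1}+q^{(a+1)n+1/2}x\,a_n-e^{-t}q^{an+1/2}x\,a_n=0$, and sum over $n$, reading the terms as $(1-\hat{y})Z$, $q^{1/2}\hat{x}\hat{y}^{a+1}Z$ and $q^{1/2}e^{-t}\hat{x}\hat{y}^{a}Z$. Your closing remark is also well taken: the summation step requires $\hat{y}x^n=q^{n}x^n$, whereas the paper's literal definition $\hat{y}=e^{-\sqrt{-1}\lambda x\,\partial/\partial x}$ with $q=e^{\sqrt{-1}\lambda}$ gives $q^{-n}$, an apparent sign typo that your choice of convention correctly repairs.
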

This proves the Gukov-Su{\l}kowski conjecture \cite{Guk-Sul}
for the framed mirror curve for the resolved conifold.
In \cite{ZhoCon} it has been proved that
by counting the disc invariants one can get the following equation of
the framed mirror curve of the resolved conifold
 with an outer brane and framing $a$:
\be \label{eqn:MirrorAFraming}
y+xy^{-a} - 1 - e^{-t}xy^{-a-1} = 0.
\ee
By changing $x$ to $-x$ and $a$ to $-a-1$,
one gets the following equation:
\be
A(x,y) = 1- y + x^{a+1}- e^{-t}xy^a = 0.
\ee
Our result indicates that when taking the quantization
higher order quantum corrections introduce an extra factor of $q^{1/2}$
for $\hat{x}$,
i.e., one should take $\hat{x} = q^{1/2} \cdot x$.

\vspace{.1in}

{\em Acknowledgements}.
The author is partially supported by  NSFC grant 1171174.


\begin{thebibliography}{999}



\bibitem{AKMV}
M. Aganagic, A. Klemm, M. Mari\~no, C. Vafa,
{\em The topological vertex},
Commun. Math. Phys. {\bf 254} (2005), 425-478, arXiv:hep-th/0305132.


\bibitem{BEMS} G.~Borot, B.~Eynard,
M.~Mulase,B.~Safnuk, {Hurwitz numbers, matrix models and topological recursion},
arXiv:0906.1206.

\bibitem{BKMP}
V. Bouchard, A. Klemm, Marcos Mari\~no,  S. Pasquetti,
{\em Remodeling the B-model},
arXiv:0709.1453.

\bibitem{Bou-Mar}
V. Bouchard, M. Mari\~no,
{\em Hurwitz numbers, matrix models and enumerative geometry},
arXiv:0709.1458.


\bibitem{Che}
L. Chen,
{\em  Symmetrized cut-join equation of Marino-Vafa formula},
arXiv:0709.1738.

\bibitem{Che2}
L. Chen,
{\em Bouchard-Klemm-Mari\~no-Pasquetti Conjecture for $\bC^3$},
arXiv:0910.3739.

\bibitem{ELSV} T. Ekedahl, S. Lando, M. Shapiro, A. Vainshtein.
{\em Hurwitz numbers and intersections on moduli spaces of curves},
Invent. Math. {\bf 146} (2001), 297-327.

\bibitem{EMS}
B. Eynard, M. Mulase, B. Safnuk
{\em The Laplace transform of the cut-and-join equation and the Bouchard-Marino conjecture on Hurwitz numbers},
arXiv:0907.5224.


\bibitem{Eyn-OraInv}
B. Eynard, N. Orantin,
{\em Invariants of algebraic curves and topological expansion},
Commun. Number Theory Phs. {\bf 1}(2007), no.2, 347-452,
arXiv:math-ph/0702045.

\bibitem{Eyn-Ora}
B. Eynard, N. Orantin,
{\em Computation of open Gromov-Witten invariants for toric
Calabi-Yau 3-folds by topological recursion, a proof of the
BKMP conjecture},
arXiv:1205.1103.

\bibitem{Gou-Jac-Vai} I.P. Goulden, D. M. Jackson,A. Vainshtein,
{\em The number of ramified coverings of the sphere by the torus and surfaces
of higher genera}, Ann. Combinatorics {\bf 4} (2000), 27-46.


\bibitem{Guk-Sul}
S. Gukov, P. Su{\l}kowski,
{\em A-polynomial, B-model, and quantization},
JHEP1202(2012)070,
arXiv:1108.0002.

\bibitem{LLZ}  C.-C. Liu, K. Liu, J. Zhou,
{\em A proof of a conjecture of Mari\~no-Vafa on Hodge Integrals}, J. Differential Geom. {\bf 65} (2003),
289-340.

\bibitem{LLZ2} C.-C. Liu, K. Liu, J. Zhou, {\em A formula of two-partition Hodge integrals},
J. Amer. Math. Soc. {\bf 20} (2007), no. 1, 149-184.

\bibitem{LLLZ}
J. Li, C.-C. Liu, K. Liu, J. Zhou,
{\em A mathematical theory of the topological vertex},
arXiv:math/0408426.

\bibitem{Mar}
M. Mari\~no,
{\em Open string amplitudes and large order behavior in topological string theory},
arXiv:hep-th/0612127.

\bibitem{Mar-Vaf}
M. Mari\~no, C. Vafa,
{\em Framed knots at large N}, Orbifolds in mathematics and physics (Madison, WI,
2001), 185-204, Contemp. Math., 310, Amer. Math. Soc., Providence, RI, 2002.

\bibitem{Oko-Pan} A. Okounkov, R. Pandharipande,
{\em Hodge integrals and invariants of the unknot},
Geometry $\&$ Topology, Vol. {\bf  8} (2004), Paper no. 17, 675-699.

\bibitem{ZhoHH} J. Zhou, {\em Hodge integrals, Hurwitz numbers, and symmetric groups}, math.AG/0308024.

\bibitem{ZhoMV} J. Zhou,
{\em  Local mirror symmetry  for one-legged topological vertex},
arXiv:0910.4320.

\bibitem{ZhoCon}
J.~Zhou,
{\em Open string invariants and mirror curve of the resolved conifold},
arXiv:1001.0447.

\bibitem{ZhoFMV}
J.~Zhou
{\em A proof of the full Mari\~no-Vafa Conjecture},
Math. Res. Lett. {\bf 17} (2010), 1091-1099,
arXiv:1001.2092.

\bibitem{ZhoAiry}
J. Zhou,
{\em Intersection numbers on Deligne-Mumford moduli spaces
and quantum Airy curve},
arXiv:1206.5896.

\end{thebibliography}
\end{document}